\numberwithin{equation}{section}
\newtheorem{theorem}{Theorem}[section]
\newtheorem{lemma}[theorem]{Lemma}
\newtheorem{problem}{Problem}
\theoremstyle{definition}
\newtheorem{definition}[theorem]{Definition}
\def\Sym{{\rm Sym}}
\def\PSL{{\rm PSL}}
\def\AGL{{\rm AGL}}
\begin{document}

\title{Circulant association schemes on triples}
\author{Prabir Bhattacharya\\ 
School of Arts, Science \& Technology, Thomas Edison State University,\\ 
Trenton, New Jersey, USA\\ and \\
Cheryl E Praeger\\
Department of Mathematics and Statistics, University of Western Australia,\\ 
Perth, Australia\\ \\
Dedicated to the memory of Vaughan Jones}

\maketitle




\begin{abstract}
Association Schemes and coherent configurations (and the related Bose-Mesner algebra  and  coherent algebras) are well known in combinatorics with many applications.
 In the 1990s,  Mesner and Bhattacharya introduced a three-dimensional generalisation of association schemes which they called an {\em association scheme on triples} (AST) and constructed examples of several families of ASTs. Many of their examples used 2-transitive permutation groups:  the non-trivial ternary relations of the ASTs were sets of ordered triples of pairwise distinct points of the underlying set left invariant by the group; and the given permutation group was a subgroup of automorphisms of the AST.  In this paper, we consider ASTs that do not necessarily admit 2-transitive groups as automorphism groups but instead a transitive cyclic subgroup of the symmetric group acts as automorphisms. Such ASTs are called {\em circulant} ASTs and the corresponding ternary relations are called {\em circulant relations}. We give a complete characterisation of circulant ASTs in terms of AST-regular partitions of the underlying set. We also show that a special type of circulant,  that we call a {\em thin circulant}, plays a key role in describing the structure of circulant ASTs. We outline several open questions. 
 
Funding\footnote{Funding Acknowledgement: Australian Research Council Discovery Project DP200100080.}.\\ 
Key Words: Permutation groups, $k$-relations, association schemes on triples, circulants.
\end{abstract}

\section{Introduction}

\subsection{Tribute to V.~F.~R.~Jones by the second author} I had the pleasure of meeting Vaughan Jones, to whose memory this paper is dedicated, on several occasions. We served together on a chair selection committee for the University of Auckland, we met at several summer schools of the New Zealand Mathematical Research Institute (NZMRI), and at international meetings such as the 2014 International Congress of Mathematicians when Vaughan became an IMU Vice President. I also recall the splendid public lecture Vaughan gave at a joint meeting of the Australian Mathematical Society and the New Zealand Mathematical Society.  The overwhelming impression Vaughan left on me, and no doubt on others, was his \emph{joie de vivre}, his enthusiastic embrace of life. I remember Vaughan as the sunburnt surfer at  NZMRI summer schools which were invariably held at locations where Vaughan could enjoy surfing daily.  Vaughan's abiding commitment to mathematics in New Zealand was evident. He is sorely missed.

Vaughan Jones' mathematical interests were very broad. Although we worked in different areas, this paper reports on new developments which we hope Vaughan would have enjoyed, namely a (slightly) higher dimensional version of the association schemes or Bose--Mesner algebras which he mentioned in his substantial 1999 preprint \cite[pp. 48-49]{VJ}   as a way of constructing planar algebras. This work \cite{VJ} of Vaughan's  is readily available on the arXiv and has received $455$ Google Scholar citations (up to 31 March 2021). However, although listed in various papers, such as \cite{Gh}, as being `New Zealand J. Math., in press',  it remains unpublished.  

\subsection{Association schemes on triples}
Association schemes and coherent configurations (\hspace{-0.1cm} \cite{ABC, Bose52, Higman, Higman2}), and the related Bose-Mesner algebras and coherent algebras (\hspace{-0.1cm} \cite{B-M})
are well-known  in combinatorics with 
many applications in coding theory, graph theory and statistics (e.g., \cite{Bailey, 
Bannai18, Bannai84, Bannai93,
Cameron, Delsarte73, Delsarte, Godsil2010, Godsil2016,
Camion,  MacW, Klin, Seidel, Zie1}).

In 1990, Dale Mesner and the first author introduced a $3$-dimensional analogue of associations schemes, which they called \emph{association schemes on triples} (in \hspace{-.02cm}\cite{MB90, MB94}). 

\begin{definition}  {\rm (\hspace{-0.1cm} \cite{MB90})}.
\label{AST1}
{\rm An {\it association scheme on triples} (in short, an AST) is a pair $(\mathcal{A}, \Omega)$, where 
$\Omega$ is a set of cardinality $n\geq3$, and $\mathcal{A}$ is a partition of 
$\Omega^3=\Omega \times \Omega \times \Omega$ into (ternary) relations $R_0, \ldots, R_m$  ($m \geq4$),  such that 
\begin{align*}
R_0 &= \{ (x,x,x): x \in \Omega \}, \\
R_1 &= \{(x,y, y): x, y \in \Omega, x \neq y \}, \\
R_2 &= \{(x,y, x): x, y \in \Omega, x \neq y \},\\
R_3 &= \{(x,x, y): x, y \in \Omega, x \neq y \},
\end{align*}

and the following three conditions hold.
\begin{enumerate} 

 \item[A1] For each $i\in \{4, \cdots, m \}$, there is a positive constant $n_i^{(3)}$ such that, for each pair $x, y\in\Omega$ with $x \neq y$, there are exactly $n_i^{(3)}$ elements $z\in \Omega$ for which $(x,y,z) \in R_i$. 

\item[A2] (Principal regularity condition) For any (not necessarily distinct) elements $i, j, k, \ell\in \{0, \cdots, m \}$, there is a constant $p_{ijk}^\ell$ such that, for each $(x,y,z) \in R_\ell$, there are exactly $p_{ijk}^\ell$ elements $w \in \Omega$ such that $(w,y,z) \in R_i, (x,w,z) \in R_j,$ and $(x,y,w) \in R_k$.

\item[A3] For each $i \in \{0, \cdots, m \}$ and each permutation $\sigma$ of $\{1,2,3\}$, there exists 
$j \in \{0, \cdots, m \}$ such that
$$
R_j=\{(x_{1 \sigma}, x_{2 \sigma}, x_{3 \sigma}) : (x_1, x_2, x_3) \in R_i \}.
$$

\end{enumerate}
The ternary relations $R_0, R_1, R_2$ and $R_3$ are called the {\em trivial relations}; and the other ternary relations of the AST are called the {\em nontrivial} relations.
}
\end{definition}

The definition provides a minimal set of conditions on an AST, and these imply various other restrictions. For example,  
conditions A1 and A3 jointly imply the existence of constants $n_i^{(1)}, n_i^{(2)}$ such that, for each pair $x, y\in\Omega$ with $x \neq y$, there are exactly $n_i^{(1)}$ elements $z\in \Omega$ for which $(z,x,y) \in R_i$, and exactly $n_i^{(2)}$ elements $z\in \Omega$ for which $(x,z,y) \in R_i$.  There are also relations between these constants  and the $p_{ijk}^\ell$. For example, it was observed in \cite[Lemma 2.1]{MB90} that, for $4\leq i\leq m$,
\begin{equation}\label{E:MB1}
n_i^{(1)} =\sum_{k=0}^m p_{i2k}^2,\quad n_i^{(2)} = \sum_{k=0}^m p_{1ik}^1.
\end{equation} 
Condition A3 yields an action of $\Sym(3)$ on $\mathcal{A}$ given by $\sigma:R_i\to R_{i}^\sigma$, where $R_{i}^\sigma$ is the relation $R_j$ in condition A3. If $R_{i}^\sigma=R_i$ for each $\sigma\in\Sym(3)$, then $R_i$ is called a \emph{symmetric} relation, and moreover, if all the $R_i$ 
($i\geq4$) are symmetric then the AST is called symmetric. 
Symmetric ASTs were studied in \cite{MB90}, where further restrictions were obtained on the structure constants $p_{ijk}^\ell$, and links were explored between symmetric ASTs and families of $2$-designs with block-size $3$.

Several infinite families of ASTs were constructed in \cite{MB90}. Notably it was shown in   \cite[Theorem 4.1]{MB90} that, for each $2$-transitive permutation group $G$ on $\Omega$, we obtain as AST by taking as the nontrivial ternary relations the $G$-orbits on the set of   triples of pairwise distinct points of $\Omega$. The group $G$ is then admitted as a group of automorphisms of the AST. The structure of a number of specific examples constructed in this way was explored in detail in \cite[Section 4]{MB90}.

\subsection{Circulant ternary relations and circulant ASTs}\label{sub:circ}

In this paper we seek further examples which may not admit an automorphism group which is $2$-transitive on $\Omega$. Instead we ask that they admit a transitive cyclic subgroup of $\Sym(\Omega)$, and without loss of generality we assume that $\Omega=\{0,\dots,n-1\}$ and that the transitive cyclic group $A=\langle a\rangle$ is generated by the map $a:x\to x+1$ (modulo $n$), which acts as an $n$-cycle on $\Omega$. 
Also we ask that each relation $R$ in these ASTs should be invariant under the action of $A$, that is $R$ should satisfy:

\begin{equation}\label{eq:3circ}
\mbox{$(i,j,k)\in R$ implies that $(i+1, j+1, k+1)\in R$.}
\end{equation}
Clearly this condition holds for each of the trivial relations $R_0, R_1, R_2, R_3$ in Definition~\ref{AST1}; it is the nontrivial ternary relations in an AST which conern us.

\begin{definition}\label{d:3circ}
{\rm  
A ternary relation $R\subseteq \Omega^3$ satisfying \eqref{eq:3circ} is called a \emph{circulant $3$-relation} or simply a \emph{$3$-circulant}. If each of the nontrivial relations of an AST $\mathcal{A}$ is a $3$-circulant,   then we say that  $\mathcal{A}$ is a \emph{circulant AST}.
}
\end{definition}

Indeed, some of the  explicit examples of ASTs constructed in \cite{MB90} from $2$-transitive permutation groups are circulants. For example, the AST in \cite[Theorem 4.4]{MB90} constructed from the $2$-dimensional projective group $\PSL(2,q)$ is a circulant AST if $q$ is even, and  the AST in \cite[Proposition 4.7]{MB90} constructed from the $1$-dimensional affine group $\AGL(1,q)$ is  a circulant AST if $q$ is prime.
 
In Definition~\ref{d:AST-reg} we introduce a special kind of partition of the set 
\begin{equation}\label{d:X}
X:=(\Omega\setminus\{0\})^{[2]}=\{ (i,j)\mid i, j\in\Omega, i\ne 0, j\ne 0, i\ne j\}.
\end{equation}
called an \emph{AST-regular partition}, and we show (Theorem~\ref{p:ASTcirc}) how to construct a circulant AST $\mathcal{A}(\mathcal{I})$ based on $\Omega$ from a given AST-regular partition $\mathcal{I}$ of $X$. It turns out that this construction is quite general, and we show that each circulant AST based on $\Omega$ arises in this way.

\begin{theorem}\label{t:circAST}
Let $\Omega=\{0,\dots,n-1\}$ and let $X$ be as in \eqref{d:X}. Then $\mathcal{A}$ is a circulant AST based on $\Omega$ if and only if  $\mathcal{A}= \mathcal{A}(\mathcal{I})$ for some AST-regular partition $\mathcal{I}$ of $X$.
\end{theorem}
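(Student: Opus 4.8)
The plan is to exploit the free action of the cyclic group $A=\langle a\rangle$ on $\Omega^3$ to turn circulant relations into subsets of $X$, and then to read off the AST axioms A1--A3 as conditions on the resulting partition of $X$. First I would record the orbit structure: since $a^t$ fixes a triple only when $t\equiv 0\pmod n$, the group $A$ acts semiregularly on $\Omega^3$, so every $A$-orbit has size $n$ and contains a unique triple with first coordinate $0$. Restricting to the set $\Omega^{[3]}$ of triples of pairwise distinct points, that distinguished representative has the form $(0,u,v)$ with $u,v\in\Omega\setminus\{0\}$ and $u\ne v$, which sets up a bijection between the $A$-orbits on $\Omega^{[3]}$ and the elements $(u,v)$ of $X$. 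A circulant relation inside $\Omega^{[3]}$ is then exactly a union of such orbits, hence corresponds to the subset $\{(u,v)\in X:(0,u,v)\in R\}$. Because the trivial relations $R_0,\dots,R_3$ partition precisely the triples with a repeated coordinate, the nontrivial relations of any AST partition $\Omega^{[3]}$; so the nontrivial relations of a circulant AST $\mathcal A$ correspond to a partition $\mathcal I=\{I_4,\dots,I_m\}$ of $X$, and the recipe defining $\mathcal A(\mathcal I)$ rebuilds these relations from $\mathcal I$. This correspondence is the backbone of both directions.

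The ``if'' direction is then immediate from Theorem~\ref{p:ASTcirc}: if $\mathcal I$ is AST-regular, that theorem asserts $\mathcal A(\mathcal I)$ is a circulant AST. For the ``only if'' direction I would begin with a circulant AST $\mathcal A$, form the partition $\mathcal I$ of $X$ as above so that the defining recipe recovers $\mathcal A$, and verify that $\mathcal I$ meets each clause of Definition~\ref{d:AST-reg}. The key observation driving every verification is that each AST axiom, evaluated on a triple, is unchanged under translation by $A$, so it suffices to test it on the normalized representatives $(0,u,v)$, converting it into a statement about $\mathcal I$ alone. Axiom A3 translates most transparently: the coordinate-permuting action of $\Sym(3)$ on $\Omega^3$ commutes with the diagonal translation action of $A$, so $\Sym(3)$ acts on the $A$-orbits and hence on $X$ (for instance the swap of the last two coordinates sends $(u,v)\mapsto(v,u)$, and the remaining permutations are computed by re-normalizing the first coordinate to $0$), and A3 says exactly that $\mathcal I$ is $\Sym(3)$-invariant. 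Axiom A1 becomes an out-regularity condition: after normalizing $x\ne y$ with $u=y-x$, the number of $z$ with $(x,y,z)\in R_i$ equals the number of $w$ with $(u,w)\in I_i$, so A1 holds iff this count is independent of the nonzero element $u$; the companion constants $n_i^{(1)},n_i^{(2)}$ similarly become in- and middle-regularity conditions on the blocks $I_i$.

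The main obstacle is the translation of the principal regularity axiom A2, which I expect to demand the most care. After normalizing a triple of $R_\ell$ to its representative, the auxiliary point $w$ ranges over all of $\Omega$, so $w$ may coincide with one of the fixed coordinates; each such coincidence forces one of $(w,y,z),(x,w,z),(x,y,w)$ into a trivial relation $R_0,\dots,R_3$ rather than a nontrivial one. The verification therefore splits into cases according to which of the indices $i,j,k,\ell$ are trivial and which coincidences of $w$ occur, and in each case one must check that the count is expressible purely through intersection data among the blocks of $\mathcal I$ and is independent of the representative chosen. My expectation is that assembling these cases produces exactly the principal-regularity clause of Definition~\ref{d:AST-reg}, thereby showing $\mathcal I$ is AST-regular and that $\mathcal A=\mathcal A(\mathcal I)$, which completes the equivalence.
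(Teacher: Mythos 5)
Your proposal is correct and follows essentially the same route as the paper: your normalization bijection between $A$-orbits on $\Omega^{[3]}$ and points of $X$ is the paper's Lemma~\ref{l:circ1}(a), the ``if'' direction is delegated to Theorem~\ref{p:ASTcirc} exactly as in the paper, and axioms A1, A3, A2 translate respectively into clauses (a), (b), (c) of Definition~\ref{d:AST-reg}. The only adjustment needed is in your final paragraph: the case analysis over trivial indices and coincidences of $w$ that you anticipate for A2 is not required in the ``only if'' direction, because Definition~\ref{d:AST-reg}(c) quantifies only over blocks $I,J,K,L\in\mathcal{I}$ (all nontrivial) and the constraint $w\notin\{0,y,z\}$ is then automatic from $J,K\subseteq X$; that case analysis is precisely the content of the already-cited Theorem~\ref{p:ASTcirc}, so invoking it again here would be redundant.
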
 

Each of the nontrivial $3$-circulants $R_I$ in a circulant AST $\mathcal{A}(\mathcal{I})$ has size $n(n-1)n_I$ where $n_I$ is the parameter in Definition~\ref{d:AST-reg}(a). It is tantalising to wonder if there are many circulant ASTs for which all these parameters $n_I$ are equal to $1$. While we have not made much progress in resolving this, we have introduced the concept of a \emph{thin $3$-circulant} in Definition~\ref{d:thin}. In particular each  nontrivial $3$-circulant $R_I$ in a circulant AST for which $n_I=1$ is thin. Moreover, using a graph theoretic model for nontrivial $3$-circulants, and applying a famous result on perfect matchings in graphs, we have shown (Theorem~\ref{prop-matching}) quite generally that every nontrivial $3$-circulant $R_I$ in a circulant AST is a disjoint union of $n_I$ nontrivial thin $3$-circulants.  

Theorem~\ref{t:circAST} is proved in Section~\ref{s:circAST}, and we explore thin $3$-circulants in Section~\ref{s:thin}. In our final section~\ref{sec:conclusion} we consider several open questions suggested by this work. In particular,  
association schemes on triples (ASTs) are intimately related to the ternary algebras introduced by the first author and Mesner in \cite{MB90} and developed further in \cite{MB94}. We plan to develop the ideas presented in this paper further by studying the ternary algebras associated with circulant ASTs.

\section{AST-regular partitions}\label{s:circAST}

The definition of an AST-regular partition of the set $X$ in \eqref{d:X} involves the following maps:
\begin{itemize}
\item the natural projection maps $\pi_1, \pi_2:X\to \Omega$ given by $\pi_1:(i,j)\to i,\ \pi_2:(i,j)\to j$;
\item the transpose map $T$, and another map $\tau$ on subsets $I\subseteq X$ given by 
\[
I^T:=\{(j,i)\mid (i,j)\in I\},\quad \mbox{and}\quad I^\tau := \{(-i, j-i) \mid (i,j)\in I \}.
\]
\end{itemize} 

We note some basic properties, namely a link between  circulant $3$-relations and subsets of $X$, and an action of $\Sym(3)$ on subsets of $X$. For isomorphic groups $G$ and $H$, acting on sets $Y$ and $Z$, respectively, a \emph{permutational isomorphism} is a pair $(\rho, f)$ such that $\rho:G\to H$ is a group isomorphism, $f:Y\to Z$ is a bijection, and for all $g\in G$ and $y\in Y$, $(y^g)f = (yf)^{(g)\rho}$. 

\begin{lemma}\label{l:circ1}
Let $X$ be as in \eqref{d:X}, where $\Omega=\{0,\dots,n-1\}$, let  $\mathcal{R}$ be the set of nontrivial $3$-circulants on $\Omega$ and $\mathcal{X}$ be the set of non-empty subsets of $X$.
\begin{enumerate}
\item[(a)] A ternary relation $R$ on $\Omega$ is a  nontrivial $3$-circulant if and only if there exists a (unique) non-empty subset $I\subseteq X$ such that $R=R_I$, where
\begin{equation}\label{e:RI}
R_I=\{(x,i+x,j+x)\mid x\in\Omega, (i,j)\in I\}. 
\end{equation}

\item[(b)] The map $f:R_I\to I$ is a bijection $\mathcal{R}\to \mathcal{X}$, and the map  $\rho: (12)\to \tau,\ (23)\to T$ extends uniquely to a group isomorphism $\rho$ from $\Sym(3)=\langle (12), (23)\rangle$ to $\langle \tau, T\rangle$.

\item[(c)] The pair $(\rho, f)$ is a permutational isomorphism from the $\Sym(3)$-action  on $\mathcal{R}$ to the $\langle \tau, T\rangle$-action on $\mathcal{X}$. In particular, the images of $I\in\mathcal{X}$ under the non-identity elements of $\Sym(3)$ are as follows (note that $\tau T\tau = T\tau T$):
\[
\begin{array}{lllll}
I^{(12)\rho} &= &I^\tau	&= &\{(-i, j-i) \mid (i,j)\in I \}  \\
I^{(23)\rho} &=&I^T &= &\{(j,i) \mid (i,j)\in I \}  \\
I^{(13)\rho} &= &I^{T\tau T}	&=&\{(i-j, -j) \mid (i,j)\in I \} \\
I^{(123)\rho} &= &I^{T\tau}	&=&\{(-j, i-j) \mid (i,j)\in I \}  \\
I^{(132)\rho} &= &I^{\tau T}	&= &\{(j-i, -i) \mid (i,j)\in I \}.
 \end{array}
\]
\end{enumerate}
 \end{lemma}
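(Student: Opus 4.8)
The plan is to handle the three parts in sequence, with part (a) supplying the bijection used throughout. For part (a) I would first observe that for $(i,j)\in X$ the triple $(x,i+x,j+x)$ has pairwise distinct coordinates precisely because $i\neq 0$, $j\neq 0$ and $i\neq j$; hence $R_I$ consists of triples of pairwise distinct points and is closed under $x\mapsto x+1$, so it is a nontrivial $3$-circulant for every non-empty $I\subseteq X$. For the converse, given a nontrivial $3$-circulant $R$ (whose triples therefore have pairwise distinct coordinates), I would set $I:=\{(i,j)\in X\mid (0,i,j)\in R\}$. Translating an arbitrary $(a,b,c)\in R$ by $-a$, which is legitimate since $R$ is a $3$-circulant, shows $(b-a,c-a)\in I$ and hence $R\subseteq R_I$, while the reverse inclusion follows again from closure under translation. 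Uniqueness is then automatic: since $(0,i,j)\in R_I$ forces $x=0$ in the parametrization, one recovers $I=\{(i,j)\in X\mid (0,i,j)\in R_I\}$, so distinct subsets give distinct relations. This simultaneously establishes that $f$ is a well-defined bijection $\mathcal{R}\to\mathcal{X}$, as claimed in part (b).

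For the group isomorphism in part (b), I would verify directly on pairs that $T^2=\mathrm{id}$, $\tau^2=\mathrm{id}$ and $\tau T\tau=T\tau T$ (equivalently $(\tau T)^3=\mathrm{id}$); for the last relation a short computation iterating $(i,j)\mapsto(j-i,-i)$ three times returns $(i,j)$. Since $\langle s,t\mid s^2,t^2,(st)^3\rangle$ is a presentation of $\Sym(3)$, the assignment $(12)\mapsto\tau$, $(23)\mapsto T$ extends to a surjective homomorphism $\rho$, and this extension is unique because $(12)$ and $(23)$ generate $\Sym(3)$. To see $\rho$ is injective I would note that $\tau$ and $T$ are distinct involutions with $\tau T$ of order exactly $3$, so $\langle\tau,T\rangle$ is dihedral of order $6=|\Sym(3)|$; alternatively one checks that the six listed maps on $\mathcal{X}$ are pairwise distinct.

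For part (c), the key reduction is that, because $\rho$ is a homomorphism, the permutational-isomorphism identity $(R^\sigma)f=(Rf)^{(\sigma)\rho}$ is multiplicative in $\sigma$ and hence need only be checked on the generators $(12)$ and $(23)$. Writing $R=R_I$, the case of $(23)$ is transparent: swapping the last two coordinates of $(x,i+x,j+x)$ gives exactly the parametrization of $R_{I^T}$. The case of $(12)$ is where the real work lies: applying $(12)$ yields the triples $(i+x,x,j+x)$, and the substitution $y=i+x$ rewrites these as $(y,-i+y,(j-i)+y)$, which is precisely $R_{I^\tau}$ with $I^\tau=\{(-i,j-i)\mid (i,j)\in I\}$. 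I expect this reparametrization---correctly matching the coordinate transposition to the map $\tau$---to be the main obstacle, chiefly because one must fix and consistently apply the conventions for both the $\Sym(3)$-action on coordinates (from condition A3) and the composition order in $\langle\tau,T\rangle$. Once the two generator cases are settled, the explicit formulas for $I^{(13)\rho}$, $I^{(123)\rho}$ and $I^{(132)\rho}$ follow by composing $\tau$ and $T$ according to the factorizations $(13)=(23)(12)(23)$, $(123)=(23)(12)$ and $(132)=(12)(23)$ and reading off the induced action on pairs; I would also record the minor check that $\tau$ and $T$ map $X$ into $X$, so that $\langle\tau,T\rangle$ genuinely acts on $\mathcal{X}$.
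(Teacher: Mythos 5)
Your proposal is correct and follows essentially the same route as the paper: part (a) by extracting $I$ from the triples of $R$ beginning with $0$, part (b) by verifying the relations $T^2=\tau^2=(\tau T)^3=1$ against a presentation of $\Sym(3)$, and part (c) by direct computation of the induced action on pairs. The only (harmless) refinement is that you reduce the verification in (c) to the two generators via multiplicativity, where the paper checks all six elements, and you make explicit the injectivity of $\rho$ and the fact that $\tau,T$ preserve $X$, which the paper leaves implicit.
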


\begin{proof}
(a) Let $R\in\mathcal{R}$. By definition of a $3$-circulant $R$ contains a triple of the form $(0,i,j)$, and since $R$ is nontrivial, for each such triple we have $(i,j)\in X$. Let $I$ be the (non-empty) set of all pairs $(i,j)$ such that $(0,i,j)\in R$. Then, by \eqref{d:3circ}, $R$ contains the set $\{(x,i+x,j+x)\mid x\in\Omega, (i,j)\in I\}$, and equality holds by the definition of $I$.
Conversely, if follows from the definition of $R_I$ that for each non-empty $I\subseteq X$, $R_I$ is a nontrivial $3$-circulant.

(b) It follows from part (a) that $f$ is a bijection, and it is straightforward to check, from the definitions of $T$ and $\tau$ given above, that $T^2=\tau^2=(T\tau)^3=1$ so that  $\rho:(12)\to \tau,\ (23)\to T$ determines a group isomorphism $\Sym(3)\to \langle T, \tau\rangle$. In particular $\tau T\tau = T\tau T$. 

(c) It is straightforward, if tedious, to check that, for all $g\in\Sym(3)$ and $R_I\in \mathcal{R}$, the image $R_I^g$ is equal to $R_J$ where $J=I^{(g)\rho}$, and these subsets $I^{(g)\rho}$ are as listed in the statement for $g\ne 1$.  Thus 
$(R_I^g)f = (R_If)^{(g)\rho}$, so $(\rho, f)$ is a permutational isomorphism. 
\end{proof}

Now we define an AST-regular partition, and show that each AST-regular partition gives rise to a circulant AST.

\begin{definition}\label{d:AST-reg}
{\rm 
A partition $\mathcal{I}$ of the set $X$ in \eqref{d:X} is said to be \emph{AST-regular} if the following three conditions hold.
\begin{enumerate}
\item[(a)] For each $I\in\mathcal{I}$, $\pi_1(I)=\pi_2(I)=\Omega\setminus\{0\}$, and there exists a positive integer $n_I$ such that, for each $x\in \Omega\setminus\{0\}$, there are exactly $n_I$ elements $j\in\Omega\setminus\{0,x\}$ with $(x,j)\in I$ and   exactly $n_I$ elements $i\in\Omega\setminus\{0,x\}$ with $(i,x)\in I$.  In particular  $|I|=(n-1)n_I$ and $|R_I|=n(n-1)n_I$ (with $R_I$ as in \eqref{e:RI}).

\item[(b)] The partition $\mathcal{I}$ is invariant under the action of $\Sym(3)$ given in Lemma~\ref{l:circ1}. 

\item[(c)]  For all (not necessarily distinct) $I,J, K, L\in\mathcal{I}$, there exists a non-negative integer $p^L_{IJK}$ such that, for all $(y,z)\in L$, there are exactly $p^L_{IJK}$ elements $w\in\Omega\setminus\{0,y,z\}$ such that 
\[
(y-w, z-w)\in I,\quad (w,z)\in J,\quad \mbox{and}\quad (y,w)\in K.
\]
\end{enumerate}  
}
\end{definition}

\begin{theorem}\label{p:ASTcirc}
Let $\Omega=\{0,\dots,n-1\}$, let $X$ be as in \eqref{d:X}, and let $\mathcal{I}$ be an AST-regular partition of $X$. Then $\mathcal{A}(\mathcal{I}):=\{R_0, R_1, R_2, R_3\}\cup\{ R_I\mid I\in\mathcal{I}\}$ is a circulant AST, with the relations $R_I$ as in \eqref{e:RI}.
\end{theorem}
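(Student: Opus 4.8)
The plan is to verify that $\mathcal{A}(\mathcal{I})$ satisfies every clause of Definition~\ref{AST1}: that it is a genuine partition of $\Omega^3$, that the four trivial relations are as required, and that conditions A1, A2, A3 all hold. The $3$-circulant property of each $R_I$ is automatic from \eqref{e:RI}, so the real content is matching the three numbered axioms to the three clauses of Definition~\ref{d:AST-reg}. First I would establish that $\{R_I\mid I\in\mathcal{I}\}$ partitions the set of triples with three pairwise distinct entries: since $\mathcal{I}$ partitions $X$ and each $R_I$ consists of the $A$-translates of the triples $(0,i,j)$ with $(i,j)\in I$, disjointness and covering follow from Lemma~\ref{l:circ1}(a) applied orbit-by-orbit. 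Together with $R_0,R_1,R_2,R_3$, which cover exactly the triples having a repeated entry, this gives a partition of all of $\Omega^3$. I would also note $m\ge 4$ follows because $X$ is non-empty for $n\ge 3$, so $\mathcal{I}$ is non-empty.

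Next I would check A1 and A3. For A1, fix $I\in\mathcal{I}$ and a pair $x\ne y$; the triples $(x,y,z)\in R_I$ correspond, after translating by $-x$, to pairs $(y-x,z-x)\in I$ with first coordinate the fixed nonzero value $y-x$, and Definition~\ref{d:AST-reg}(a) says there are exactly $n_I$ admissible second coordinates. So $n_I^{(3)}=n_I$, which is positive, giving A1. For A3, the permutation $\sigma\in\Sym(3)$ sends $R_I$ to the relation obtained by permuting coordinates; by Lemma~\ref{l:circ1}(c) this equals $R_{I^{(\sigma)\rho}}$, and Definition~\ref{d:AST-reg}(b) guarantees $I^{(\sigma)\rho}\in\mathcal{I}$, so the image is again one of our relations. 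The translates of the trivial relations under $\Sym(3)$ stay within $\{R_0,R_1,R_2,R_3\}$ by inspection, so A3 holds for every relation.

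The main work is A2, the principal regularity condition, and this is where I would spend the most care. Given $\ell$ and a triple $(x,y,z)\in R_\ell$, I must count $w$ with $(w,y,z)\in R_i$, $(x,w,z)\in R_j$, $(x,y,w)\in R_k$ simultaneously. The strategy is to reduce to the base point by translating by $-x$: writing $(x,y,z)=(x,\,i'+x,\,j'+x)$ with $(i',j')=(y-x,z-x)\in L$, the three membership conditions on $w$ transform into conditions on $w-x$ expressed through the coordinate shifts appearing in Definition~\ref{d:AST-reg}(c), namely $(y-w,z-w)\in I$, $(w,z)\in J$ (after re-centering), and $(y,w)\in K$. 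The constant $p^L_{IJK}$ provided by clause (c) is then exactly $p^\ell_{ijk}$ for the nontrivial relations, with the subtlety that $i,j,k,\ell$ may index trivial relations too. I therefore expect the genuine obstacle to be the bookkeeping for the \emph{mixed cases} where one or more of $R_i,R_j,R_k,R_\ell$ is trivial: here $w$ is forced to coincide with $x$, $y$, or $z$, and one must check that the count is a well-defined constant (often $0$ or $1$) independent of the chosen triple, rather than being supplied directly by clause (c), which only addresses the case of all four relations nontrivial. I would handle these degenerate combinations by a short finite case analysis keyed to which coordinate $w$ collapses onto, using translation-invariance to confirm independence from the representative. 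The translation $x\mapsto 0$ trick makes every count manifestly depend only on the classes $I,J,K,L$ and not on the representative triple, which is precisely what A2 demands.

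Finally I would remark that the derived regularities quoted after Definition~\ref{AST1} (the constants $n_i^{(1)},n_i^{(2)}$) are not needed as hypotheses, since they follow from A1 and A3 once those are established; and that clause (a) already records $|R_I|=n(n-1)n_I$, confirming the sizes asserted in the discussion preceding the theorem. The upshot is that all three axioms reduce, via the single device of translating the base triple to start at $0$, to the three clauses of AST-regularity, so the equivalence in Theorem~\ref{t:circAST} will follow once the converse direction is also treated.
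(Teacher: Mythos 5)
Your overall architecture matches the paper's proof exactly: the trivial relations and each $R_I$ are $3$-circulants by construction, A1 comes from Definition~\ref{d:AST-reg}(a) after translating the base point to $0$, A3 comes from Definition~\ref{d:AST-reg}(b) via Lemma~\ref{l:circ1}(c), and A2 reduces to Definition~\ref{d:AST-reg}(c) when all four of $I,J,K,L$ are nontrivial. The one place where your sketch goes wrong is in the mixed cases of A2. You assert that whenever one or more of $R_i,R_j,R_k,R_\ell$ is trivial, ``$w$ is forced to coincide with $x$, $y$, or $z$,'' and you propose a case analysis keyed to which coordinate $w$ collapses onto, expecting counts of $0$ or $1$. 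That assertion is false, and the case it misses is the most delicate one in the paper. Take $L=1$, so $(x,y,z)=(x,y,y)$ with $x\ne y$. The condition $(w,y,y)\in R_i$ with $i\ne 0$ forces only $i=1$ and $w\ne y$: it does \emph{not} pin $w$ to any coordinate. The remaining conditions $(x,w,y)\in R_J$ and $(x,y,w)\in R_K$ can both involve nontrivial classes $J,K\in\mathcal{I}$, and the count of admissible $w$ is then $n_J$, which can be arbitrarily large. To see that this count is a well-defined constant you must prove $K=J^T$ (using that $\mathcal{I}$ is closed under the transpose, i.e.\ Definition~\ref{d:AST-reg}(b)) and then invoke the regularity parameter $n_J=n_{J^T}$ from clause~(a). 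So clauses (a) and (b) are genuinely needed for A2 itself, not only clause (c); your reduction ``A2 $\leftrightarrow$ clause (c) plus degenerate bookkeeping with counts $0$ or $1$'' does not close this case.

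The other mixed cases are handled as you expect: if $0\in\{I,J,K,L\}$ but not all are trivial, the count is $0$; and if $L\in\mathcal{I}$ while some of $I,J,K$ is trivial, then $w$ really is forced into $\{x,y,z\}$ and the count is $0$ or $1$ (e.g.\ $w=x$ forces $R_I=R_L$, $R_J=R_3$, $R_K=R_2$). Your treatment of the partition property, A1, A3, and the fully nontrivial case of A2 is correct and essentially identical to the paper's. Patch the $L\in\{1,2,3\}$ case as above and the proof is complete.
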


\begin{proof}
Let $\mathcal{A}= \mathcal{A}(\mathcal{I})$. As we noted after \eqref{eq:3circ}, each $R_i$ for $0\leq i\leq 3$, is a $3$-circulant, and by Lemma~\ref{l:circ1}(a), each $R_I\in\mathcal{A}$ is also a $3$-circulant. It remains to prove that each of the conditions A1, A2, A3 of Definition~\ref{d:3circ} is valid. Let $I\in\mathcal{I}$. By Definition~\ref{d:AST-reg}(a), there is a positive integer $n_I$ such that, for each non-zero $x\in\Omega$, there are exactly $n_I$ elements $(x,j)\in I$, or equivalently, exactly $n_I$ triples $(0,x,j)\in R_I$. Thus for each $x,y\in \Omega$ with $x\ne y$, there are exactly $n_I$ elements $j\in \Omega\setminus\{x, y\}$ such that $(0,y-x,j-x)\in R_I$ and hence, by Lemma~\ref{l:circ1}(a), exactly $n_I$ elements $j$ such that $(x,y,j)\in R_I$. Therefore condition A1 holds. By Definition~\ref{d:AST-reg}(b), $\Sym(3)$ induces an action on the parts of $\mathcal{I}$ and hence on the nontrivial relations in $\mathcal{A}$, and since $\Sym(3)$ also acts naturally on $\{R_1, R_2, R_3\}$ and fixes $R_0$, it follows that condition A3 holds. 

It remains to confirm condition A2. We must consider all $I, J, K, L$, not necessarily distinct, in $\mathcal{I}\cup\{0,1,2,3\}$. 
If all of $I, J, K, L$ lie in $\mathcal{I}$ then the condition in A2 follows from Definition~\ref{d:AST-reg}(c). 
Also, if $\{I,J,K,L\}\subseteq \{0,1,2,3\}$, then the condition in A2 holds as these trivial relations are the same in every AST. So we may assume that at least one, but not all of $I, J, K, L$ lies in
$\mathcal{I}$. 

If  $0\in\{I,J,K,L\}$ then, we claim that there are no elements $w$ satisfying the inclusions in A2 for any triple in $R_L$, and hence A2 holds for  these values of $I, J, K, L$. If $L=0$, then there are no elements $w$  for which the A2 inclusions hold for any $(x,x,x)\in R_L$, since one of the other relations is nontrivial and hence consists of triples with parirwise distinct entries. On the other hand if, without loss of generality, $I=0$, and if the inclusions in A2 hold for some triple $(x,y,z)\in R_L$ and some element $w$, then $w=y=z$, and hence all of  $I,J,K,L\in \{0,1,2,3\}$, which is a contradiction. This proves the claim. 

The remaining cases to be are considered are those in which $0\not\in \{I,J,K,L\}$, and at least one of $I,J, K, L$ lies in $\{1,2,3\}$ and at least one lies in $\mathcal{I}$. 

Suppose first that $L\in\{1,2,3\}$. Using condition A3 we may assume that $L=1$. Assume also that, for some triple $(x,y,y)\in R_L$, there exists $w$ such that the inclusions in A2 hold. In particular $(w,y,y)\in R_I$, so $I=1$ and $w\ne y$ (as $I\ne 0$).  Also $(x,w,y)\in R_J$ and $(x,y,w)\in R_K$, and since at least one of $R_J, R_K$ is nontrivial, it follows that $x,y,w$ are pairwise distinct, and hence both $J, K$ lie in $\mathcal{I}$. 
Now $\mathcal{I}$ is a partition of $X$ and contains $J^T$, by Definition~\ref{d:AST-reg}(b). Since $(x,w,y)\in R_J$ we also have $(0,w-x, y-x)\in R_J$ (since $R_J$ is a $3$-circulant), so $(w-x,y-x)\in J$ and hence $(y-x,w-x)\in J^T$ which implies that $(x,y,w)\in R_{J^T}$. However $(x,y,w)\in R_K$, and therefore $K=J^T$. It now follows using Definition~\ref{d:AST-reg}(a) that, for all $(x,y,y)\in R_L$, the number of $w$ for which the A2 inclusions hold is equal to $n_K=n_J$, so condition A2 holds in this case. 

Thus we may assume that $L\in\mathcal{I}$ so $R_L$ is nontrivial. Assume that, for some triple $(x,y,z)\in R_L$ (so $x,y,z$ are pairwise distinct), there exists $w$ such that the inclusions in A2 hold. Since at least one of $R_I, R_J, R_K$ is trivial we must have $w\in\{x,y,z\}$, and there is only one possible value of $w$. Consider the case where $w=x$. Here $(x,y,z)\in R_I$ so $R_I=R_L$ is nontrivial, $(x,x,z)\in R_J$ so $R_J=R_3$, and $(x,y,x)\in R_K$ so $R_K=R_2$. Then for each $(x',y',z')\in R_L=R_I$, there is a unique element $w'$ for which the A2 inclusions hold for this triple, namely $w'=x'$. Thus condition A2 holds in this case also. The arguments proving condition A2 for the cases $w=y$ and $w=z$ are similar. This completes the proof that $\mathcal{A}$ is an AST.  
\end{proof}

Finally in this section we prove Theorem~\ref{t:circAST}.

\begin{proof}(Proof of Theorem~\ref{t:circAST})\quad 
Let $\Omega=\{0,\dots,n-1\}$ and let  $X$ be as in \eqref{d:X}. 
By Theorem~\ref{p:ASTcirc}, $\mathcal{A}(\mathcal{I})$ is a circulant AST for each AST-regular partition $\mathcal{A}$ of $X$. So to prove Theorem~\ref{t:circAST} we assume that $\mathcal{A}$ is a circulant AST based on $\Omega$ and show that $\mathcal{A}$  is equal to $\mathcal{A}(\mathcal{I})$ for some AST-regular partition $\mathcal{I}$ of $X$. 
By Lemma~\ref{l:circ1}(a), each nontrivial relation in $\mathcal{A}$ is equal to $R_I$ for some non-empty subset $I\subseteq X$. Since the nontrivial relations form a partition of $\Omega^{[3]}=\{(x,y,z)\mid x,y,z \ \mbox{pairwise distinct}\}$, it follows that the subsets $I$ form a partition $\mathcal{I}$ of $X$. Thus $\mathcal{A} =\{R_0, R_1, R_2, R_3\}\cup\{ R_I\mid I\in\mathcal{I}\}$ and we need to check that $\mathcal{I}$ satisfies conditions (a)--(c) of Definition~\ref{d:AST-reg}.

Consider $I\in\mathcal{I}$. It follows from condition A1 of Definition~\ref{d:3circ} that there is a positive constant $n_I$ such that, for each $x\ne 0$, there are exactly $n_I$ elements $j\in \Omega\setminus\{ 0, x\}$ with $(0,x,j)\in R_I$, or equivalently, $(x,j)\in I$. (We note that $n_I$ is the parameter $n_i^{(3)}$ of A1, where $R_I$ is the relation  $R_i$ of A1.) In particular,  $\pi_1(I)=\Omega\setminus\{0\}$ and $|R_I|=n(n-1)n_I$. By condition A3 of Definition~\ref{d:3circ}, and taking $\sigma=(23)$, $\mathcal{A}$ contains the relation
\[
R_{I}^\sigma = \{(x_1, x_3, x_2) \mid (x_1, x_2, x_3)\in R_I\},
\] 
and from the description of $R_I$ in Lemma~\ref{l:circ1}(b), it follows that $R_{I}^\sigma = R_{I^T}$ where $I^T=\{(j,i)\mid (i,j)\in I\}$ is the image of $I$ under the transpose map. This proves that $I^T\in\mathcal{I}$. Moreover, $\pi_2(I)=\pi_1(I^T)$, and we have just proved that the latter must be equal to $\Omega\setminus\{0\}$ since $I^T\in\mathcal{I}$.  Condition A1 applied to $R_{I^T}$ yields that the number $s$ of triples $(0,j,x)\in R_{I^T}$ for a fixed $j$, is independent of the choice of $j\ne 0$. Thus $|R_{I^T}|=n(n-1)s$, and since $|R_{I^T}|=|R_I|$ it follows that $s=n_I$ (which therefore also equals $n_{I^T}$), proving condition (a) of Definition~\ref{d:AST-reg}.

To prove Definition~\ref{d:AST-reg}(b) we use condition A3 of Definition~\ref{d:3circ} and apply each element of $\Sym(3)$ to $R_I$ using Lemma~\ref{l:circ1}(c). For example, taking $\sigma=(12)$, the relation $R_I^\sigma$ lies in $\mathcal{A}$ by condition A3, and consists of all the triples $(x+i, x, x+j)$ with $x\in\Omega$ and $(i,j)\in I$; this is precisely the set of triples in the relation $R_{J}$ with $J=I^{(12)\rho}$ as given in Lemma~\ref{l:circ1}(c). The fact that $\mathcal{I}$ is closed under the action of the other elements of $\Sym(3)$ follows by  analogous arguments.

Consider (not necessarily distinct) $I,J, K, L\in\mathcal{I}$. We use condition A2 of Definition~\ref{d:3circ} on the relations $R_I, R_J, R_K, R_L$ to prove Definition~\ref{d:AST-reg}(c). First we note that, if A2 holds with the constant $p=p^L_{IJK}$ for all triples $(0,y,z)\in R_L$, then it holds with the same constant $p$ for all triples $(x,y,z)\in R_L$. So let $(0,y,z)\in R_L$, or equivalently $(y,z)\in L$. (Such triples exist since $L\in\mathcal{I}$.) Then by A2, there exist $p=p^L_{IJK}$ (possibly zero) elements $w\in\Omega$ such that 
$(w,y,z)\in R_I, (0,w,z)\in R_J$ and $(0,y,w)\in R_K$. We note that these three inclusions are equivalent to  $(y-w, z-w)\in I$, $(w,z)\in J$, and $(y,w)\in K$, respectively. Also, since $J, K\subseteq X$, each such element $w$ must be distinct from $0, y, z$. This proves Definition~\ref{d:AST-reg}(c), and completes the proof of Theorem~\ref{t:circAST}.
\end{proof}

\section{Thin $3$-circulants}\label{s:thin}

As discussed in the introduction, a nontrivial $3$-circulant $R_I$, lying in a circulant AST based on a set $\Omega$ of size $n$, contains exactly $n(n-1)n_I$ triples, for some positive integer $n_I$. If $n_I=1$ then $R_I$ is a thin $3$-circulant, which we define as follows. 

\begin{definition}\label{d:thin}
{\rm 
Let $ab\in\{12, 13, 23\}$. Then a $3$-circulant $R$ based on a set $\Omega$  is said to be \emph{$ab$-thin} if the map
$\sigma_{ab}:R\to \Omega^2$ given by $\sigma_{ab}:(x_1,x_2,x_3)\to (x_a,x_b)$ is one-to-one and has image 
$\Omega^{[2]}=\{(x,y)\mid x,y\in\Omega, x\ne y\}$. We say that $R$ is \emph{thin} if it is $ab$-thin for some $ab$.
}
\end{definition}

Note that, by definition, each thin $3$-circulant contains exactly $|\Omega^{[2]}|=n(n-1)$ triples. First we study the structure of an arbitrary thin $3$-circulant, and in particular we see that each of the trivial relations $R_1, R_2, R_3$ in an AST is thin. 
A \emph{derangement} of a set $\Omega$ is a permutation of $\Omega$ which has no fixed points.  

\begin{lemma}\label{l:thin1}
Let $\Omega=\{0,1,\dots, n-1\}$ and let $X$ be as in \eqref{d:X}.
\begin{enumerate}
\item[(a)] Each of $R_1, R_2, R_3$ defined in Definition~\ref{AST1} is a thin $3$-circulant, and moreover each of them is $ab$-thin for exactly two pairs $ab\in\{12, 13, 23\}$.

\item[(b)] Let $ab\in\{12, 13, 23\}$ and suppose that $\{1,2,3\}=\{a,b,c\}$. Then a $3$-circulant $R$ based on $\Omega$ is $ab$-thin if and only if there exists a map $\rho:\Omega\setminus\{0\}\to\Omega$ such that $R$ consists precisely of the $n(n-1)$ triples $\mathbf{x}$ which satisfy
\[
\mathbf{x}_d=\left\{\begin{array}{ll}
	x 	& \mbox{if}\ d=a \\
	x+y & \mbox{if}\ d=b \\
	x+y^\rho & \mbox{if}\ d=c, \\
\end{array}\right.
\]
for some $(x, x+y)\in\Omega^{[2]}$. Moreover such an $ab$-thin $3$-circulant $R$ is nontrivial if and only if $\rho$ is a derangement of $\Omega\setminus\{0\}$.  


\item[(c)] If $I\subseteq X$, and if both $I$ and its image $I^\tau$ satisfy the conditions of Definition~\ref{d:AST-reg}(a), with parameter $n_I=1$ (with $\tau$ as in Lemma~\ref{l:circ1}(c)), then the $3$-circulant $R_I$ is $ab$-thin for all three pairs $ab\in\{12,13, 23\}$.
\end{enumerate}
\end{lemma}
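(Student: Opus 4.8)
The plan is to treat the three parts in order, using the normal form of part (b) as the engine for parts (a) and (c). For part (a) I would simply read off the three projections. Each of $R_1,R_2,R_3$ has, for every one of its triples, exactly one pair of equal coordinates, in positions $\{2,3\}$, $\{1,3\}$, $\{1,2\}$ respectively. If $ab$ is the pair of equal positions, then $\sigma_{ab}$ lands in the diagonal of $\Omega^2$, so its image cannot be $\Omega^{[2]}$ and the relation is not $ab$-thin. For either of the other two pairs $ab$, the map $\sigma_{ab}$ records the free coordinate together with the repeated value; since the free coordinate ranges over all of $\Omega$ while the repeated value is the distinct entry, $\sigma_{ab}$ is a bijection onto $\Omega^{[2]}$. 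This yields thinness for exactly two pairs in each case.

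For the main equivalence in part (b), the backward direction is a direct check: for any $\rho$ the displayed family is visibly a $3$-circulant (replacing $x$ by $x+1$ permutes the triples), and $\sigma_{ab}$ sends the triple indexed by $(x,y)$ to $(x,x+y)$, a bijection onto $\Omega^{[2]}$ because $(x,x+y)$ both determines and is determined by $(x,y)$ with $y\ne 0$. The forward direction is the substantive step. Given that $R$ is $ab$-thin, the bijection $\sigma_{ab}\colon R\to\Omega^{[2]}$ lets me write $x_c=g(x_a,x_b)$ for a well-defined function $g$ on $\Omega^{[2]}$. I would then exploit the circulant property: adding $1$ to all coordinates forces $g(u+1,v+1)=g(u,v)+1$, so iterating gives $g(u,v)=u+g(0,v-u)$. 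Putting $y=v-u\ne 0$ and defining $y^\rho:=g(0,y)$ then realizes $R$ in precisely the displayed form. I expect this circulant-reduction of the two-variable function $g$ to the one-variable map $\rho$ to be the heart of part (b).

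For the \emph{moreover} clause I would analyze when the entries of a triple coincide. Since $y\ne 0$ one always has $x_a\ne x_b$; moreover $x_a=x_c\Leftrightarrow y^\rho=0$ and $x_b=x_c\Leftrightarrow y^\rho=y$. Hence every triple of $R$ has pairwise distinct entries (equivalently $R$ is disjoint from the trivial relations, so nontrivial) if and only if $\rho$ has no fixed point and $0\notin\mathrm{im}(\rho)$, i.e. $\rho$ is a fixed-point-free self-map of $\Omega\setminus\{0\}$. To identify this with ``$\rho$ is a derangement'' one must further know that $\rho$ is a \emph{bijection} of $\Omega\setminus\{0\}$, and I expect securing this surjectivity to be the main obstacle, since fixed-point-freeness and zero-avoidance alone do not force it. The clean way to handle it is to note that bijectivity of $\rho$ is exactly equivalent to $R$ being thin for the second pair $ac$ (as $\sigma_{ac}$ sends $(x,y)$ to $(x,x+y^\rho)$, which is onto $\Omega^{[2]}$ precisely when $\rho$ permutes $\Omega\setminus\{0\}$); so the passage to a genuine derangement is to be read together with the two-sided regularity hypotheses that drive part (c).

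Finally, part (c) follows from part (b) once its hypotheses are translated. That $I$ satisfies Definition~\ref{d:AST-reg}(a) with $n_I=1$ says exactly that $I=\{(i,i^\alpha)\mid i\in\Omega\setminus\{0\}\}$ is the graph of a derangement $\alpha$ of $\Omega\setminus\{0\}$; by part (b) (with $(a,b,c)=(1,2,3)$ and $\rho=\alpha$) this already gives that $R_I$ is $12$-thin, and bijectivity of $\alpha$ makes $\sigma_{13}\colon(x,i+x,i^\alpha+x)\mapsto(x,i^\alpha+x)$ a bijection onto $\Omega^{[2]}$, so $R_I$ is $13$-thin. For the remaining pair I invoke the second hypothesis: by Lemma~\ref{l:circ1}(c), $I^\tau=\{(-i,i^\alpha-i)\}$, and its satisfying Definition~\ref{d:AST-reg}(a) with parameter $1$ says precisely that $i\mapsto i^\alpha-i$ is a bijection of $\Omega\setminus\{0\}$; this is exactly the condition for $\sigma_{23}\colon(x,i+x,i^\alpha+x)\mapsto(i+x,i^\alpha+x)$ to biject onto $\Omega^{[2]}$, giving $23$-thinness. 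Hence $R_I$ is $ab$-thin for all three pairs, and the two hypotheses on $I$ and $I^\tau$ are seen to be exactly what supplies the bijectivity that was the obstacle in part (b).
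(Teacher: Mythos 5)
Your proposal follows essentially the same route as the paper's proof in all three parts: part (a) is the same coordinate check; your forward direction in (b), reducing the two-variable function $g$ via $g(u+1,v+1)=g(u,v)+1$ to the one-variable map $y\mapsto g(0,y)$, is a functional-equation phrasing of the paper's argument (which counts $|R(x)|=|R|/n=n-1$ and reads $\rho$ off the fibre $R(0)$); and your part (c) is the paper's argument with the permutational isomorphism of Lemma~\ref{l:circ1}(c) unpacked into the explicit bijection $i\mapsto i^\alpha-i$.

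The one place you diverge is the \emph{moreover} clause of (b), and your scruple there is justified. The paper's proof passes directly from ``$y^\rho\ne 0,y$ for all $y\in\Omega\setminus\{0\}$'' to ``$\rho$ is a derangement of $\Omega\setminus\{0\}$'', but, as you observe, nontriviality of $R$ only forces $\rho$ to be a fixed-point-free self-map of $\Omega\setminus\{0\}$; it does not force $\rho$ to be a permutation, which is what the paper's definition of derangement requires. Concretely, for $n=4$, $ab=12$ and $\rho\colon 1\mapsto 2,\ 2\mapsto 1,\ 3\mapsto 1$, the triples $(x,x+1,x+2)$, $(x,x+2,x+1)$, $(x,x+3,x+1)$ over $\mathbb{Z}_4$ form a $12$-thin nontrivial $3$-circulant whose $\rho$ is not injective. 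So the ``only if'' half of the moreover clause fails as literally stated; your diagnosis --- that bijectivity of $\rho$ is exactly equivalent to $ac$-thinness and is what the two-sided hypotheses of part (c) supply --- is the correct repair, and you should state explicitly that the equivalence holds with ``derangement'' weakened to ``fixed-point-free map into $\Omega\setminus\{0\}$'' (or under the additional hypothesis that $R$ is thin for a second pair). This does not propagate into the rest of the paper: Theorem~\ref{prop-matching} only uses the ``if'' direction of Lemma~\ref{l:thin1}(b) with a $\rho$ that is a bijection by construction from a perfect matching, and deduces nontriviality of $R_J$ from $R_J\subseteq R_I$ rather than from the moreover clause.
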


\begin{proof}
(a)  For $R_1=\{(x,y,y)\mid x,y\in\Omega, x\ne y\}$, it is straightforward to check that $\sigma_{12}$ and $\sigma_{13}$ satisfy the requirements of Definition~\ref{d:thin}, but of course $\sigma_{23}$ does not. In particular $R_1$ is thin. Proofs for the $3$-circulants $R_2, R_3$ are similar. 

(b) Recall from Definition~\ref{d:thin} that $(x, x+y)\in\Omega^{[2]}$ if and only if $y\in \Omega\setminus\{0\}$. Thus if, for some $ab\in\{12, 13, 23\}$, a $3$-circulant $R$ consists of the triples in part (b) for some map $\rho$ then $\sigma_{ab}: R\to \Omega^{[2]}$ is onto, and also  $|R|=n(n-1)$; and it follows that $\sigma_{ab}$ is one-to-one. Hence $R$ is $ab$-thin. Suppose conversely that $R$ is an $ab$-thin $3$-circulant, and for each $x\in\Omega$ let $R(x):=\{\mathbf{x}\in R \mid \mathbf{x}_a=x\}$. Since $R$ is a $3$-circulant, the map $\mathbf{x}\to \mathbf{x}+(x,x,x)$ defines a bijection from $R(0)$ to $R(x)$, and hence $|R(x)|=|R|/n=n-1$ for each $x$. 
By Definition~\ref{d:thin}, $R(0)$ comprises a unique triple $\mathbf{x}$ with $\mathbf{x}_a=0, \mathbf{x}_b=y$ for each $y\ne 0$. Define $y^\rho\in\Omega$ to be the entry $\mathbf{x}_c$ in this triple. Then $R$ is as in part (b) for this map $\rho$.  

Suppose now that $R$ is an $ab$-thin $3$-circulant relative to the map $\rho$ above. Then $R$ is nontrivial if and only if $\mathbf{x}_c\ne \mathbf{x}_a, \mathbf{x}_b$ for all $\mathbf{x}\in R$, or equivalently, $y^\rho\ne 0, y$ for all $y\in\Omega\setminus\{0\}$, that is to say, $\rho$ is a derangement of $\Omega\setminus\{0\}$.

(c) Suppose that $I\subseteq X$ satisfies the conditions of Definition~\ref{d:AST-reg}(a) with parameter $n_I=1$. Then it follows from \eqref{e:RI} and Definition~\ref{d:AST-reg}(a)  that $R_I=\{(x, i+x, j+x)\mid x\in\Omega, (i,j)\in I\}$ projects onto $\Omega^{[2]}$ under both $\sigma_{12}$ and $\sigma_{13}$. Also these maps are one-to-one since $|R_I|=n(n-1)n_I=n(n-1)$. Thus $R_I$ is $12$-thin and $13$-thin. Now assume also that 
$J:=I^\tau$ satisfies the conditions of Definition~\ref{d:AST-reg}(a). It follows from Lemma~\ref{l:circ1} that applying $\tau$ to $I$ corresponds to applying the permutation $(12)\in\Sym(3)$ to $R_I$, and so 
$R_J = R_I^{(12)}  = \{ (x_2, x_1, x_3) \mid (x_1,x_2,x_3)\in R_I\}$. In particular $|R_J|=|R_I|$, so $n_J=n_I=1$. Our argument above applied to $J$ instead of $I$ shows that $R_J$ is $12$-thin and $13$-thin. Now the map  $\sigma_{13}$ for $R_J$ maps each triple $(x_2, x_1, x_3)\in J$ to $(x_2,x_3)$, and as  $R_J$ is $13$-thin, this map is one-to-one and has image $\Omega^{[2]}$. It follows that the map $\sigma_{23}: (x_1,x_2,x_3)\to (x_2,x_3)$ for $R_I$ is  one-to-one and has image $\Omega^{[2]}$, and hence $R_I$ is $23$-thin. 
\end{proof}

Finally in this section we prove that each nontrivial $3$-relation in a circulant AST is a disjoint union of thin $3$-circulants. This result follows from a famous graph theoretic result, due to K\"{o}nig~\cite{konig}, on perfect matchings (which we explain carefully, in context, in the proof). 

\begin{theorem}\label{prop-matching}
Let $\Omega=\{0,1,\dots, n-1\}$ and let $X$ be as in \eqref{d:X}. 
\begin{enumerate}
\item[(a)] Let $I\subseteq X$ such that the conditions of Definition~\ref{d:AST-reg}(a) hold. Then $R_I$ is a disjoint union of $n_I$ nontrivial $3$-circulants each of which is $12$-thin and $13$-thin.

\item[(b)] If $\mathcal{A}$ is a circulant AST based on $\Omega$, then part (a) holds for each nontrivial $3$-circulant in $\mathcal{A}$.
\end{enumerate}
\end{theorem}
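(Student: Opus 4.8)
The plan is to recast the subset $I\subseteq X$ as a regular bipartite graph and then apply König's theorem~\cite{konig} to split it into perfect matchings, each of which will furnish one of the required thin $3$-circulants. First I would set up the dictionary. Form a bipartite graph $G$ whose two vertex classes $V_1, V_2$ are each a copy of $\Omega\setminus\{0\}$, and join $i\in V_1$ to $j\in V_2$ precisely when $(i,j)\in I$. The two halves of the regularity statement in Definition~\ref{d:AST-reg}(a) — that each nonzero $x$ has exactly $n_I$ out-neighbours and exactly $n_I$ in-neighbours in $I$ — say exactly that every vertex of $V_1$ and of $V_2$ has degree $n_I$, i.e.\ that $G$ is $n_I$-regular. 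The condition $i\ne j$ built into $X$ is immaterial for this regularity (since $i$ and $j$ sit in different classes of $G$), but I would flag it now because it is precisely what will later force nontriviality.

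Next I would invoke König's theorem in the form: an $n_I$-regular bipartite graph decomposes into $n_I$ perfect matchings $M_1,\dots,M_{n_I}$. (Extract one perfect matching, which exists for a regular bipartite graph; deleting its edges leaves an $(n_I-1)$-regular bipartite graph; iterate.) Each matching $M_k$ is the graph of a bijection $\phi_k$ of $\Omega\setminus\{0\}$, so I set $I_k:=\{(i,\phi_k(i))\mid i\in\Omega\setminus\{0\}\}\subseteq I$. Since the matchings partition the edge set of $G$, the sets $I_k$ partition $I$, and therefore by \eqref{e:RI} the relation $R_I$ is the disjoint union $\bigsqcup_{k=1}^{n_I} R_{I_k}$.

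It then remains to verify that each $R_{I_k}$ is a nontrivial $12$-thin and $13$-thin $3$-circulant. Because $\phi_k$ is a bijection of $\Omega\setminus\{0\}$, every nonzero $i$ occurs exactly once as a first coordinate and every nonzero $j$ exactly once as a second coordinate of $I_k$; equivalently $I_k$ satisfies Definition~\ref{d:AST-reg}(a) with parameter $1$. Exactly as in the opening argument of the proof of Lemma~\ref{l:thin1}(c), this makes $\sigma_{12}$ and $\sigma_{13}$ surjective onto $\Omega^{[2]}$, and since $|R_{I_k}|=n(n-1)=|\Omega^{[2]}|$ both maps are then bijective, so $R_{I_k}$ is $12$-thin and $13$-thin. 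Finally $(i,\phi_k(i))\in I\subseteq X$ gives $\phi_k(i)\ne i$ for all $i$, so $\phi_k$ is a derangement of $\Omega\setminus\{0\}$, which by Lemma~\ref{l:thin1}(b) is exactly the condition for the $12$-thin $3$-circulant $R_{I_k}$ to be nontrivial. This settles part (a), and part (b) is then immediate: by Theorem~\ref{t:circAST} a circulant AST $\mathcal{A}$ equals $\mathcal{A}(\mathcal{I})$ for an AST-regular partition $\mathcal{I}$, so each of its nontrivial $3$-circulants is some $R_I$ with $I\in\mathcal{I}$ satisfying Definition~\ref{d:AST-reg}(a), whence part (a) applies directly (one may equally note that condition A1 of Definition~\ref{AST1} supplies this regularity).

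I do not expect a deep obstacle; the entire difficulty lies in the encoding step. The crux is to build $G$ so that the \emph{two-sided} regularity of $I$ (regular out-degrees and regular in-degrees) becomes the ordinary regularity of a bipartite graph, and so that a single perfect matching delivers both $12$-thinness (out-degree one everywhere) and $13$-thinness (in-degree one everywhere) at once, with the $i\ne j$ constraint of $X$ guaranteeing the derangement property and hence nontriviality. Once this dictionary is correctly in place, König's theorem and Lemma~\ref{l:thin1} finish the argument with only the routine cardinality bookkeeping indicated above.
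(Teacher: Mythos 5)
Your proof is correct and follows essentially the same route as the paper's: the same encoding of $I$ as an $n_I$-regular bipartite graph on two copies of $\Omega\setminus\{0\}$, the same appeal to K\"{o}nig's theorem to split its edge set into $n_I$ perfect matchings, and the same use of Lemma~\ref{l:thin1} to identify each matching with a nontrivial $12$-thin and $13$-thin $3$-circulant, with part (b) reduced to part (a) via Theorem~\ref{t:circAST}. The only cosmetic difference is that you obtain nontriviality from the derangement criterion of Lemma~\ref{l:thin1}(b), whereas the paper simply observes that $R_{J}\subseteq R_I$.
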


\begin{proof}
(a) 
Recall that $I$ is a subset of the set $X$ defined in \eqref{d:X} such that (i) for each $x\ne 0$ there are exactly $n_I$ elements $y$ such that $(x,y)\in I$, (ii) for each $y\ne 0$ there are exactly $n_I$ elements $x$ such that $(x,y)\in I$, and (iii) $\pi_1(I)$ and $\pi_2(I)$ are both equal to $\Omega\setminus\{0\}$. 

Such a subset $I$ may be interpreted as the set of edges of a subgraph $\mathcal{G}_I$ of the complete bipartite graph $\Gamma=K_{n-1,n-1}$ with vertex set $\mathbb{Z}_2\times (\Omega\setminus\{0\})$ and edges of the form $\{(0,x), (1,y)\}$ for all $x,y\in\Omega\setminus\{0\}$. The subgraph $\mathcal{G}_I$ is obtained by identifying each pair $(x,y)\in I$ with the edge joining $(0,x)$ and $(1,y)$. Conditions (i)-(iii) ensure that each vertex of $\Gamma$ lies on exactly $n_I$ edges of $\mathcal{G}_I$,  that is to say, the subgraph $\mathcal{G}_I$ is bipartite and regular of valency $n_I$.

A perfect matching in $\mathcal{G}_I$ is defined as a subset $J$ of edges of $\mathcal{G}_I$ such that each vertex of $\mathcal{G}_I$ lies on exactly one edge in $J$. In other words, a perfect matching $J$ of $\mathcal{G}_I$ under our identification of edges corresponds to a subset of $I$ (which abusing notation slightly we will also call $J$) for which the corresponding  $3$-circulant $R_J=\{(x, i+x, j+x)\mid x\in\Omega, (i,j)\in J\}$ is of the form given in Lemma~\ref{l:thin1}(b) with $ab=12$ and the  map $\rho$ given by $i^\rho=j$ if and only if $(i,j)\in J$. Note that $\rho$ is well-defined since $J$ projects onto $\Omega\setminus\{0\}$ in its first coordinate and the map $\sigma_{12}$ for $J$ is one-to-one. Also $|J|=n-1$ as $J$ is a perfect matching, and  the image of $\rho$ is $\Omega\setminus\{0\}$, again since $J$ is a perfect matching in $\mathcal{G}_I$. We conclude that $R_J$ is $12$-thin. Similarly  $R_J$ is $13$-thin since $R_J$ is of the form given in Lemma~\ref{l:thin1}(b) with $ab=13$ relative to the  map $\rho^{-1}$.  Hence a perfect matching $J$ in $\mathcal{G}_I$ corresponds to a subset $R_{J}$ of $R_I$ which, by Lemma~\ref{l:thin1}(b), is a  $12$-thin and $13$-thin $3$-circulant. Also $R_J$ is nontrivial since $R_J\subseteq R_I$.
    
For the final part of the proof, we apply K\"{o}nig's theorem~\cite{konig} that the edge set of a finite regular bipartite graph $\mathcal{G}_I$ of valency $n_I$ may be partitioned as a disjoint union of $n_I$ perfect matchings $J_1, \dots, J_{n_I}$. Recent proofs of this result can be found in  \cite[Theorem 3]{NP} and \cite[Theorem 3.5.1]{GR}, see also the discussion in \cite[Remark 1.11(a)]{IP}.  
We conclude that $R_I$ is the disjoint union of  $3$-circulants $R(J_i)$, for $i=1,\dots, n_I$, each of which is nontrivial, $12$-thin and $13$-thin.

(b) By Theorem~\ref{t:circAST}, $\mathcal{A}=\mathcal{A}(\mathcal{I})$ for some AST-regular partition $\mathcal{I}$ of $X$. Thus each nontrivial $3$-circulant in $\mathcal{A}$ satisfies the conditions of Definition~\ref{d:AST-reg}(a), and the result now follows from part (a). 
\end{proof}

\section{Conclusions and new problems}\label{sec:conclusion}

Our study of circulant ASTs has raised a number of questions we believe are worthy of further study, and would shed more light on these interesting structures. Our main result Theorem~\ref{t:circAST} gives a complete characterisation of circulant ASTs in terms of AST-regular partitions of the set $X$ in \eqref{d:X}. This is a significant `reduction result' which should make it easier to find examples. So our first problem is to do just this, since most of the known examples arise as orbits on triples of $2$-transitive permutation groups. It would be an excellent outcome to find new examples.

\begin{problem}
Find new families of circulant ASTs.
\end{problem}

The automorphism group of a circulant AST contains the cyclic subgroup 
$A=\langle a\rangle$ used in Subsection~\ref{sub:circ} in the definition of $3$-circulants, and $A$ is  transitive on the base set $\Omega$. The paper \cite{Li2} gives a general structural picture of permutation groups containing transitive cyclic subgroups, extending the classification due to W. Feit and G. A. Jones of the primitive groups with this property. Although all the primitive examples are either affine of prime degree or $2$-transitive there are some other kinds of groups identified in \cite{Li2} which might give a starting place for new constructions.
On the other hand AST-regular partitions have a new data structure which has not yet been studied systematically, even for the known ASTs.

\begin{problem}
Describe the AST-regular partitions of $X$ corresponding to the known families of circulant ASTs arising from $2$-transitive permutation groups.
\end{problem}
 
Our second main result Theorem~\ref{prop-matching} hints at the importance of nontrivial thin $3$-circulants in the structure of circulant ASTs. The graph theoretic result we used to prove this, namely that the edge set of a regular bipartite graph could be decomposed as a disjoint union of perfect matchings, does not give a canonical way of doing this. Our first question after proving this result was whether there were examples of circulant ASTs in which each nontrivial $3$-circulant is thin? The unique AST based on a set of size $3$ clearly has this property, but can this happen for larger values of $n$?

\begin{problem}
Find examples of circulant ASTs in which each nontrivial $3$-circulant is thin.
\end{problem}

It is possible that this condition is undesirably stringent, and should be relaxed.
A lot of attention has been given to constructing \emph{symmetric ASTs} $\mathcal{A}$, namely those in which each nontrivial ternary relation in $\mathcal{A}$ is invariant under the action of $\Sym(3)$. The structure of symmetric ASTs was studied in \cite{MB90}, and from a different point of view by Klin, Mesner and Woldar~\cite{Klin2} where the focus was on combinatorial versions of a transitive extension of a generously transitive permutation group. (A transitive group is \emph{generously transitive} if each of its orbits on ordered pairs is invariant under the transpose map $T$ defined in Section~\ref{s:circAST}, see~\cite{PMN75}.) 
If we were seeking symmetric circulant ASTs, then a natural special case would be those in which each nontrivial $3$-circulant is the symmetrisation of a thin $3$-circulant (the smallest $\Sym(3)$-invariant $3$-circulant containing the thin $3$-circulant).  

\begin{problem}
Find examples of symmetric circulant ASTs in which each nontrivial $3$-circulant is the symmetrisation of a thin $3$-circulant.
\end{problem} 


As suggested in \cite{MB90} and also in \cite{Klin2}, it would be possible to generalise the results for ASTs to higher dimensional schemes, that is those consisting of $k$-ary relations (subsets of $\Omega^k$) for some $k>3$ (see, for example, \cite{Cooper, Gnang17, Gnang20, Lincoln}).  Finally, as we mentioned in the introduction, we hope to explore in future work the ternary algebras corresponding to circulant ASTs, refining the work of Mesner and the second author in \cite{MB90, MB94}.


\begin{thebibliography}{60} 

\bibitem{ABC}
{\sc P. P. Alejandro, R. A.  Bailey, and P. J. Cameron}, {\em Association schemes and permutation groups}. In The 18th British Combinatorial Conference (Brighton, 2001).
 Discrete Math.  266  (2003),  pp.~47--67.

\bibitem{Bailey} {\sc R.~A. Bailey}, Association Schemes: Designed Experiment, Algebra and Combinatorics, Cambridge University Press, Cambridge, 2004.  


\bibitem{Bannai84} {\sc E. Bannai  and T. Ito}, Algebraic Combinatorics I,  Benjamin/Cummins, Menlo Park, CA, 1984.  

  \bibitem{Bannai93} {\sc E. Bannai}, {\em Association Schemes and Fusion Algebras, (An Introduction)}, J. Algebraic Combinatorics,  2 (1993), pp. 327--344. 
  
  
  \bibitem{Bannai18} {\sc E. Bannai}, {\em History of Association Schemes, a Personal View}, Conf. on Symmetry versus Regularity, Pilsen, Czech Republic (2018). \\
\texttt{https://www.iti.zcu.cz/wl2018/pdf/wl2018[underscore]bannai.pdf}  

\bibitem{Bose52} {\sc R.C. Bose and T. Shimamoto},  {\em Classification and analysis of partially balanced incomplete block designs with two associate classes},  J. American Stat. Assoc.,  47 (1952), pp. 151--184.  

\bibitem{B-M} {\sc R.~C. Bose and D.~M. Mesner}, {\em On linear associative algebras corresponding to association schemes of partially balanced designs}, Annals Math. Stats., 30 (1959), pp. 21--38. 

\bibitem{Cameron} {\sc P.~J. Cameron}, {\em Coherent configurations, association schemes, and permutation groups}, in, Groups, Combinatorics and Geometry, (ed. A.A. Ivanov, M.W. Liebeck and J. Saxl), World Scientific, Singapore (2003), pp. 55--71.  

\bibitem{Camion} {\sc P. Camion}., {\em Codes and Association Schemes: Basic Properties of Association Schemes Relevant to Coding}, in, Handbook of Coding Theory, Vol. II, (V. S. Pless and W. C. Huffman, eds.) Elsevier, Amsterdam, Netherlands, (1988), pp. 1441--1567.  

\bibitem{Cooper} {\sc J. Cooper},  {\em Adjacency Spectra of Random and Uniform Hypergraphs}, Linear Algebra Appl., 596 (2020), pp. 184-202.

\bibitem{Delsarte73} {\sc P. Delsarte}, {\em An Algebraic Approach to Association Schemes of Coding Theory}, Philips Research Supplements, no. 10, Eindhoven, Netherlands (1973).  

\bibitem{Delsarte} {\sc P. Delsarte  and V.~I. Levenshtein}, {\em Association schemes and coding theory}, IEEE Trans.  Info. Th., 44 (1988),  pp. 2477--2504.  

\bibitem{Gh} 
{\sc S. K. Ghosh}, {\em Planar algebras: a category theoretic point of view}, J. Algebra 339 (2011), pp. 27--54.  

\bibitem{Gnang17} {\sc E.~K. Gnang and Y. Filmus}, {\em On the spectra of hypermatrix direct sum and Kronecker products constructions}, Linear Algebra Appl., 519 (2017), pp. 238--277.

\bibitem{Gnang20}  {\sc E.~K. Gnang and  Y. Filmus}, {\em On Bhattacharya-Mesner rank of third order hypermatrices}, Linear Algebra Appl., 588 (2020), pp. 391--418. 

\bibitem{Godsil2010} {\sc C.~D. Godsil}, {\em Association Schemes}, Dept. of Combinatorics and Optimization, Univ. Waterloo, Canada (2018).\\
\texttt{https://www.math.uwaterloo.ca/\~cgodsil/assocs/pdfs/Assoc.pdf}  

\bibitem{Godsil2016} {\sc C.~D. Godsil and K. Meagher}, Erd{\"o}s--Ko--Rado Theorems: Algebraic Approaches, Cambridge Univ. Pr., London, UK, 2016.  

\bibitem{GR} {\sc C.~D. Godsil and G.~F. Royle}, {\em Algebraic Graph Theory}, in: Graduate Texts in Mathematics, vol. 207, Springer-Verlag, New York, 2001.  

\bibitem{Higman} {\sc D.~G. Higman}, {\em Coherent Configurations}, Rend. Mat. Sem. Padova, 44 (1970), pp. 1-25.  

\bibitem{Higman2} {\sc D.~G. Higman}, {\em Coherent algebras}, Linear Algebra Appl. 93 (1987), pp. 209-239.  

\bibitem{IP} {\sc M.~N. Iradmusa aand C.~E. Praeger}, {\em Derangement action digraphs and graphs}, 
Eur. J. Combinatorics 80 (2019), pp.~361--372.  




\bibitem{VJ} {\sc V. F. R. Jones}, Planar Algebras I,  Available at: arxiv:math.QA/9909027 v1, 1999. 122 pp. 

\bibitem{Klin2} {\sc M.~H. Klin, D.~M. Mesner and A.~J. Woldar}, {\em A Combinatorial Approach to Transitive Extensions of Generously Unitransitive Permutation Groups}, J. Comb. Designs, 18 (2010), pp. 369-391. 

\bibitem{Klin} {\sc M.~H. Klin and A.~J. Woldar}, {\em Dale Mesner, Higman \& Sims, and the Strongly Regular Graph with Parameters $(100, 22, 0, 6)$}, 
Bull. Inst. Comb. Appl., 63 (2012), pp.~13-35. 
 
\bibitem{konig}
{\sc D. K\"{o}nig}, {\em \"{U}ber Graphen und ihre Anwendung auf Determinantentheorie und Mengenlehre}, Math. Ann. 77 (1916), pp.~453--465.


\bibitem{Li2} {\sc C.~H. Li and C.~E. Praeger}, {\em On Finite Permutation Groups with a Transitive Cyclic Subgroup}, J. Algebra, 349 (2012), pp. 117-127.

\bibitem{Lincoln} {\sc A.  Lincoln and V.~V. Williams and R Williams}, {\em Tight Hardness for Shortest Cycles and Paths in Sparse Graphs}, Proceedings 29th Annual ACM-SIAM Sympos. Discrete Algorithms 
New Orleans, LA, USA, (2018), pp. 1236--1252.  
arXiv preprint:\\ 
\texttt{https://people.csail.mit.edu/virgi/6.s078/papers/sparseapsp.pdf}.

 \bibitem{MacW} {\sc F.~J. MacWilliams and N.~J.~A. Sloane}, The Theory of Error-Correcting Codes, North-Holland, Amsterdam, 1978.  


\bibitem{MB90} {\sc D.~M. Mesner and P. Bhattacharya}, {\em Association Schemes on Triples and a Ternary Algebra}, J. Comb. Th., Ser. A 55 (2) (1990), pp.~204--234. 

\bibitem{MB94} {\sc D.~M. Mesner and P. Bhattacharya}, {\em A Ternary Algebra Arising from Association Schemes on Triples}, J. Algebra, 164 (3) (1994), pp. 595--613. 

\bibitem{NP} {\sc D. Naddef and W.R. Pulleyblank}, {\em Matchings in regular graphs}, Discrete Math. 34 (1981) pp.~283--291. 

\bibitem{PMN75} {\sc P. M. Neumann}, {\em Generosity and characters of multiply transitive permutation groups}, Proc. London Math. Soc. (3) 31 (1975), 457--481.

\bibitem{Seidel} {\sc J.~J. Seidel}, {\em Introduction to Association Schemes}, {\rm S\'{e}minaire  Lotharingien de  Combinatoirem, 26 (1991), pp. 1-17. \\
\texttt{https://www.emis.de/journals/SLC/opapers/s26seidel.pdf} }

\bibitem{Zie1} {\sc P.~H. Zieschang},  {\rm An Algebraic Approach to Association Schemes, Springer Lecture Notes in Math, 1628, Berlin-Heidelberg-New York, 1996.} 

\end{thebibliography}
\end{document}